\begin{document}

%  ENVIRONMENTS   +++++++++++++++++++++++++++++++++++++++++++++++++
\newtheorem{theorem}{Theorem}
\newtheorem{lemma}[theorem]{Lemma}
\newtheorem{corollary}[theorem]{Corollary}
\newtheorem{prop}[theorem]{Proposition}
\theoremstyle{definition}
\newtheorem{example}{Example}
\newtheorem{remark}[theorem]{Remark}

\newcommand\esub{\subseteq}
\newcommand\sub{\subset}
\newcommand\nul{\varnothing} %{\emptyset}

\def\Xb{\overline{X}}
\def\Yb{\overline{Y}}
\def\Tb{\overline{T}}
\def\Sb{\overline{S}}
\def\st{\colon\,}
\def\nul{\varnothing}

%%%%%%%%%%%%%%%%%%%%%%%%%%%%%%%%%%%%%%%%%%%%%%%%%%%%%%%%%%%%%%%%

\title{Strongly connectable digraphs and non-transitive dice}

\author{
Simon Joyce\thanks{Binghamton University (SUNY), {\tt joyce@math.binghamton.edu}}\,,
Alex Schaefer\thanks{Binghamton University (SUNY), {\tt schaefer@math.binghamton.edu}}\,,
Douglas B.\ West\thanks{Departments of Mathematics, Zhejiang Normal University
\& University of Illinois,\newline {\tt dwest@math.uiuc.edu}.} \thanks{Research supported by
Recruitment Program of Foreign Experts, 1000 Talent Plan, State Administration
of Foreign Experts Affairs, China.
}\,,
and 
Thomas Zaslavsky\thanks{Binghamton University (SUNY), {\tt zaslav@math.binghamton.edu}}
}
\date{\today}
\maketitle

\begin{abstract}
We give a new proof of the theorem of Boesch--Tindell and
Farzad--Mahdian--Mahmoodian--Saberi--Sadri that a directed graph extends to a
strongly connected digraph on the same vertex set if and only if it has no
complete directed cut.  Our proof bounds the number of edges needed for such an
extension; we give examples to demonstrate sharpness.  We
apply the characterization to a problem on non-transitive dice.
\end{abstract}

{\small
Mathematics Subject Classification (2010):  {05C20}

Keywords: {Strongly connectable digraph, Complete directed cut, Non-transitive dice}
}

\section{Introduction}

Characterization theorems in graph theory are important because they
often guarantee short certificates for both a ``yes'' and ``no'' answer to the 
question of whether a graph satisfies a particular property.
Many such theorems state that obvious necessary conditions are also sufficient.
In this note we give a new proof of such a result about directed graphs (Theorem \ref{main}, originally due to Farzad, Mahdian, Mahmoodian, Saberi, and Sadri~\cite{FMMSS} based on a similar theorem of Boesch and Tindell~\cite{BT}) and we apply it to a problem on non-transitive dice from Schaefer~\cite{S}.  
Our proof applies to a less general family than the original but yields a
previously unknown numerical bound.

A \emph{strict digraph} is a directed graph in which each
unordered pair of vertices is the set of endpoints of at most one edge;
that is, a strict digraph is an orientation of a simple graph.
A digraph is \emph{strongly connected}, or \emph{strong}, if it contains a (directed) path from
$x$ to $y$ for every ordered pair of vertices.  A digraph $G$ \emph{extends} to
a digraph $G'$ if $V(G)=V(G')$ and $E(G)\esub E(G')$.  We ask when a (strict)
digraph extends to a strongly connected strict digraph.  Note that it does so
if and only if it extends to a strongly connected tournament, 
where a \emph{tournament} is an orientation of a complete graph.  

For a set $X\esub V(G)$, let $\Xb = V(G)-X$.  When $\nul\ne X\subset V(G)$,
the \emph{cut} $[X,\Xb]$ is the set $\{xy\in E(G)\st x\in X,y\in\Xb\}$, where
we write $xy$ for an edge oriented from $x$ to $y$.  A cut $[X,\Xb]$ is a
\emph{dicut} if there is no ``back edge'' of the form $yx$ with $y\in\Xb$ and
$x\in X$.  It is a \emph{complete dicut} if it contains all $|X|\cdot|\Xb|$
edges of the form $xy$ with $x\in X$ and $y\in\Xb$.

Obviously, a digraph that extends to a strongly connected strict digraph
contains no complete dicut.  This obvious necessary condition is also
sufficient.  That fact is a special case of a theorem by Farzad et al.~\cite[Theorem B(iii)]{FMMSS}.

\begin{theorem}[{Strong connectability}]\label{main}
A strict digraph of order at least $3$ extends to a strong strict digraph if and only if
it contains no complete dicut.
\end{theorem}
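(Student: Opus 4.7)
The plan is to reduce the nontrivial (``if'') direction to a classical Hamiltonian-cycle theorem for semicomplete digraphs. The forward direction is the observation made just before the theorem statement: the edges of a complete dicut $[X,\Xb]$ block every directed path from $\Xb$ to $X$, so no extension can be strong. I focus below on the converse, and assume throughout that $G$ is a strict digraph on $n \ge 3$ vertices containing no complete dicut.

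The central construction is an auxiliary digraph $H$ on $V(G)$ defined by $xy \in E(H)$ if and only if $yx \notin E(G)$. Equivalently, $H$ contains every arc of $G$ together with both orientations of every non-adjacent pair; in particular $H$ is \emph{semicomplete}, meaning that at least one of $xy, yx$ lies in $E(H)$ for each unordered pair $\{x,y\}$, with both present exactly when the pair is non-adjacent in $G$. Unwinding the definitions shows that for any $\nul \ne X \sub V(G)$, the cut $[X,\Xb]$ is a complete dicut in $G$ if and only if $H$ contains no arc from $\Xb$ to $X$. Hence the hypothesis on $G$ is precisely the assertion that $H$ is strongly connected.

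With $H$ a strong semicomplete digraph on $n \ge 3$ vertices, I would invoke the classical Hamiltonian-cycle theorem (Camion's theorem for tournaments, extended to strong semicomplete digraphs) to produce a directed Hamiltonian cycle $C$ in $H$. By construction, each arc of $C$ either lies in $E(G)$ already or joins a pair non-adjacent in $G$, so adjoining to $G$ the arcs of $C$ not already present produces a strict extension $G'$; because $G'$ contains the Hamiltonian cycle $C$, it is strongly connected. At most $n$ new arcs are used, which supplies the numerical bound promised in the abstract, and sharpness at $n$ is immediate from taking $G$ edgeless.

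The step I expect to require the most care is invoking a Hamiltonian-cycle theorem for strong \emph{semicomplete} digraphs rather than strong tournaments: Camion's theorem is typically stated for tournaments, and a digon in $H$ cannot always be replaced by a single arc without destroying strong connectivity. One must therefore either cite a semicomplete version of the result or adapt the standard inductive proof, which does go through almost verbatim. Checking the small case $n=3$ (where a strong semicomplete digraph is already a directed triangle up to extra digons) and noting why the hypothesis $n \ge 3$ is sharp---a strict digraph on two vertices has no complete dicut exactly when it is edgeless, yet cannot be strongly completed strictly---closes the argument. The remaining pieces---the construction of $H$, the equivalence between complete dicuts in $G$ and arc-free partitions in $H$, and the edge count---are essentially bookkeeping once this combinatorial input is in hand.
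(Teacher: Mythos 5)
Your argument is correct, but it takes a genuinely different route from the paper. You encode the problem in the auxiliary digraph $H$ with $xy\in E(H)$ iff $yx\notin E(G)$; the verification that $H$ is semicomplete, that complete dicuts of $G$ correspond exactly to one-way cuts of $H$ (so ``no complete dicut'' is equivalent to $H$ being strong), and that a Hamiltonian cycle of $H$ on $n\ge 3$ vertices never uses both arcs of a digon and hence yields a strict strong extension, are all sound, as is your remark explaining why order at least $3$ is needed. The one external input is the Hamiltonicity of strong \emph{semicomplete} digraphs; as you note, Camion's theorem is usually stated for tournaments, and a strong semicomplete digraph need not contain a strong spanning tournament, so you must either cite the semicomplete version (it is standard, e.g.\ via Moon-type pancyclicity arguments) or rerun the inductive proof, which does go through. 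The paper instead gives a self-contained induction on the number $r$ of strong components: it repeatedly adds one or two edges joining source components to other components, checks directly that no complete dicut is created, and handles the weakly disconnected case separately. What each approach buys: yours is shorter and immediately produces a spanning cycle in the extension (hence the NP certificate mentioned in the introduction), but it leans on a nontrivial classical theorem and only gives the bound of $n$ added edges; the paper's proof is elementary and yields the sharper quantitative statement of Theorem~\ref{ubound}, namely at most $r$ added edges (at most $r-1$ when $G$ is weakly connected), which your construction does not recover --- so your claim that $n$ is ``the numerical bound promised in the abstract'' overstates matters, since the paper's bound is in terms of strong components rather than vertices.
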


We call such a digraph \emph{strongly connectable}.  We prove sufficiency in
Section~\ref{scd} from a new, stronger result, Theorem~\ref{ubound}, in which
we give an upper bound on how many edges need to be added.  

A consequence of Theorem \ref{main} is that the problem of deciding 
strong connectability belongs to the class NP $\cap$
co-NP.  A certificate for strong connectability is a strongly connected
extension, and a certificate for not being strongly connectable is a complete
dicut.  Both are easy to confirm.  The latter is obvious.  For the former,
there exists an extension to a strong tournament, which
contains a spanning cycle by Moon~\cite[Section 4]{M}.

\medskip

In Section~\ref{secdice} we apply Theorem~\ref{main} to a problem on
``non-transitive dice'' discussed in Schaefer~\cite{S}.  A set of dice is
\emph{non-transitive} if there is a cycle such that each die beats the next
in cyclic order, and it is \emph{balanced} if there exists a value $p>1/2$ such
that for any two dice, one beats the other with probability exactly $p$.
Schaefer asked which digraphs are realizable by balanced non-transitive dice;
he showed that those digraphs are precisely the ones that are strongly
connectable.  (That result led to our investigation of strong connectability.)  
Thus Theorem \ref{main} provides a criterion for a digraph to be realizable by
balanced non-transitive dice, where a set of dice \emph{realizes} a digraph if
the vertices can be assigned to dice in the set so that when $uv$ is an edge,
the die representing $u$ beats the die representing $v$. 

\medskip

The theorem of Farzad et al.\ is based on a theorem of Boesch and 
Tindell~\cite{BT}.  Consider a \emph{partially oriented multigraph} $M$, that is, a
graph that may have multiple edges (but not loops) and in which a subset of
edges has been oriented.  Assume the \emph{underlying graph} $\hat M$, which is
obtained from $M$ by treating all edges as undirected, is connected.
A \emph{weak dipath} in $M$ is a subgraph $P$ of $M$ whose underlying graph
$\hat P$ is a path in $\hat M$ such that every oriented edge in $P$ follows the
same direction as $\hat P$; that is, there are no ``back edges'', although $P$
may contain undirected edges.

We call $M$ \emph{strong} if for every ordered pair $(u,v)$ of vertices, there
is a weak dipath in $M$ from $u$ to $v$.  An \emph{$M$-cut} is a set of the
form $\{xy\in E(M)\st xy\text{ is oriented},\ x\in X,\ y\in\Xb\}$ for some nonempty
$X\sub V(M)$; we call $X$ the \emph{originating set}.  An \emph{$M$-dicut} is an
$M$-cut that contains every edge of $M$ having exactly one endpoint in its
originating set.  That is, for an $M$-dicut where $X$ is the originating set,
$M$ has no undirected edge $xy$ or back edge $yx$ with $x\in X$ and $y\in \Xb$.
In particular, when $\hat M=K_n$ and $G$ is the subdigraph of $M$ consisting of
all oriented edges, the $M$-dicuts are precisely the complete dicuts of $G$ as
defined before Theorem~\ref{main}.  

Boesch and Tindell characterized when $M$ can be oriented (by directing the
undirected edges) to be strongly connected.  Their result generalizes Robbins'
Theorem~\cite{R}, which solved the special case where all edges are unoriented. 
Farzad et al.~\cite[Theorem B(i, iii)]{FMMSS} observed an important corollary:
another necessary and sufficient condition is that the underlying graph
$\hat M$ be 2-edge-connected and there be no $M$-dicut.  (Farzad et al.~\cite{FMMSS}
attribute this to Boesch and Tindell~\cite{BT}, but it was not stated as such in \cite{BT}.)  This result
is more general than Theorem \ref{main}, which considers only the special case
where $\hat M$ is the complete graph $K_n$, with our strict digraph being their
subgraph of oriented edges in $M$.  It is not clear whether our proof bounding
the number of edges of $K_n$ that must be oriented generalizes to their setting.

\medskip

A different analogue of our problem has also been studied previously.  Eswaran
and Tarjan~\cite{ET} studied strong connectability for general digraphs, which
allow antiparallel pairs of edges.  This makes a huge difference.  Without
strictness, every digraph extends to a strong digraph, simply by introducing
every ordered pair of distinct vertices as an edge.  Thus, their tasks are to
find the minimum number of edges to add and an algorithm to produce a smallest
strong extension.  This can be viewed as another special case of the
Boesch--Tindell model, in which $\hat M$ is $K_n$ with all edges doubled and
$M$ has no parallel directed edges.

Frank~\cite{F} and Frank and Jord\'an~\cite{FJ1, FJ2}
generalized the questions to extensions that have connectivity or
edge-connectivity at least $k$, again allowing antiparallel edge pairs and
again seeking the smallest such extension and an algorithm.  

The problems for strict digraphs are very different, beginning with the fact
that not every strict digraph is strongly connectable.
We also do not find an exact minimum number or smallest set of edges.  
The complexity of determining the minimum size of an extension set remains
open, and it seems hard to generalize Theorem~\ref{main} to
characterize digraphs extendable to a strict $k$-connected digraph.

\section{Strongly connectable digraphs}\label{scd}

A \emph{strong component} of a digraph $G$ is a maximal strongly connected
subgraph.  The strong components of $G$ yield an acyclic digraph $G^*$ by
contracting each strong component to a vertex and eliminating duplicate edges
and loops.  A strong component is a \emph{source} or \emph{sink component} of
$G$ according as it is a source or sink vertex in $G^*$ (it may be both).
Given distinct strong components $C$ and $C'$, we say that $C'$ is a
\emph{successor} of $C$ if there is a path from $C$ to $C'$ in $G^*$, and then
also $C$ is a \emph{predecessor} of $C'$.  For $v\in V(G)$, let $C(v)$ denote
the strong component of $G$ containing $v$.  The \emph{underlying graph} of a
digraph $G$, as with a partially oriented multigraph, is the graph $\hat G$
obtained by treating its edges as unordered pairs.  We say that $G$ is
\emph{weakly connected} when $\hat G$ is connected.  The \emph{weak components}
of a digraph $G$ are the subdigraphs induced by the vertex sets of components
of $\hat G$.

We prove Theorem~\ref{main} by obtaining an upper bound on the number of edges
needed.  
We will strengthen the upper bound for disconnected graphs in
Lemma~\ref{dubound-sum} and Proposition~\ref{dubound}.

\begin{theorem}[Upper bound]\label{ubound}
Let $G$ be a strict digraph having at least three vertices and $r$ strong
components.  If $G$ has no complete dicut, then $G$ extends to a strongly
connected strict digraph by adding at most $r$ edges, with equality if and only
if $\hat G$ is disconnected and each weak component of $G$ is strong.
\end{theorem}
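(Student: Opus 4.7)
My plan is to prove Theorem~\ref{ubound} by induction on $r$, splitting into the extremal and non-extremal cases.

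The base case $r=1$ is immediate: $G$ is already strong, so no edges are added, and the equality condition fails since $\hat G$ is connected in this case.

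\emph{Extremal case ($\hat G$ disconnected and every weak component strong).} Let $W_1,\ldots,W_r$ be the weak components, which here coincide with the strong components. Pick one vertex $v_i\in W_i$ and add the $r$ directed edges forming the cycle $v_1 v_2, v_2 v_3, \ldots, v_{r-1} v_r, v_r v_1$. Strictness is preserved since $G$ has no edges between distinct $W_i$, and strong connectivity follows from combining the added cycle with the fact that each $W_i$ is internally strong. For the matching lower bound, every weak component must receive at least one added incoming edge, and these $r$ edges are distinct, so at least $r$ additions are required.

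\emph{Non-extremal case.} Here $\hat G$ is connected or some weak component fails to be strong, and we must show that $\le r-1$ additions suffice. The plan is to choose a cyclic ordering $C_{\sigma(1)},\ldots,C_{\sigma(r)}$ of the strong components and add new edges realizing the cycle $C_{\sigma(1)}\to C_{\sigma(2)}\to\cdots\to C_{\sigma(r)}\to C_{\sigma(1)}$; the result is strong for the same reason as in the extremal case. Non-extremality ensures the condensation $G^*$ has at least one edge $C\to D$, and we place $C$ and $D$ consecutively in $\sigma$ so that this adjacency is already realized by an existing edge of $G$, saving one addition and yielding $\le r-1$ added edges.

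The main obstacle is guaranteeing that each new edge respects the strict constraint. For each cycle-adjacency $C_{\sigma(i)}\to C_{\sigma(i+1)}$ requiring a new edge, we must find $u\in C_{\sigma(i)}$ and $v\in C_{\sigma(i+1)}$ with the reverse $vu$ not in $G$. If this fails for some pair of components, then every vertex of $C_{\sigma(i+1)}$ has an edge to every vertex of $C_{\sigma(i)}$; iterating such blockages around the cycle, and combining them with the no-complete-dicut hypothesis applied to an appropriate downset of $G^*$ (a union of source components closed under predecessors), should force a contradiction. Selecting the correct cyclic ordering $\sigma$ and the exact downset needed to derive this contradiction is the technical heart of the argument, since the strictness-blocking substructures must collectively be shown to complete a dicut rather than be spread across many pairs in a way that evades the hypothesis.
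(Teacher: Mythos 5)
Your base case and extremal case follow the paper's outline, with one slip: when $r=2$ your cycle construction adds the antiparallel pair $v_1v_2,\ v_2v_1$, which is not a strict digraph. (The paper avoids this by using the hypothesis of at least three vertices: some weak component has two or more vertices, so one can add an edge into one vertex of that component and an edge out of a \emph{different} vertex.) That is a repairable oversight; the serious problem is in the non-extremal case.

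There, your plan is a one-shot construction: choose a cyclic ordering of the $r$ strong components, realize one adjacency by an existing edge of the condensation, and add $r-1$ new edges for the remaining adjacencies. As you yourself note, an adjacency $C_{\sigma(i)}\to C_{\sigma(i+1)}$ can be blocked by strictness exactly when every edge from $C_{\sigma(i+1)}$ to $C_{\sigma(i)}$ is already present, and you assert that if no admissible ordering exists, the blockages ``should force a contradiction'' with the no-complete-dicut hypothesis via ``an appropriate downset.'' You have not proved this, and you explicitly label the choice of the ordering $\sigma$ and of the downset as the unresolved technical heart. But that assertion \emph{is} the content of the theorem: everything else in your write-up (the cycle through internally strong components is strong, the count of $r-1$, the equality analysis) is routine. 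Nothing in the proposal shows why the family of ``complete domination'' blockages, which need not be transitive and can be distributed among many pairs of components, must assemble into a complete dicut, nor why an existing condensation edge can always be placed on the cycle compatibly with the unblocked adjacencies. So the proof is incomplete at its decisive step. For contrast, the paper does not attempt a global ordering at all: it inducts on $r$, uses the dicut $[S,\overline S]$ from the union of the source components (which cannot be complete) to find a single addable edge $xy$ into a source vertex, adds one or two edges to merge strong components, and then carefully verifies that the enlarged digraph again has no complete dicut so the induction hypothesis applies; the disconnected case is first reduced to the connected one by cyclically linking sinks to sources of the weak components. That ``no complete dicut is created'' verification is precisely the kind of argument your sketch would need, adapted to all $r-1$ simultaneous additions, and it is missing.
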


\begin{proof}
Suppose that $\hat G$ is disconnected and every weak component of $G$ is strong.
If $r>2$, then choose one vertex from each component and add a cycle of $r$
edges through them to obtain a strongly connected extension.  If $r=2$, then
since $G$ has at least three vertices, one weak component has at least two
vertices, and we can add edges to and from distinct vertices in that component
to obtain a strong extension.  Since every weak component must receive an added
entering edge, equality holds in this case.

Henceforth we may assume that not every weak component is strong, so $G$ has
a strong component that is a sink component but not a source component.  

{\bf Case 1:} \emph{$G$ is weakly connected.}
To prove the upper bound $r-1$, we use induction on $r$.  For $r=1$ there is
nothing to prove (no edge need be added).  Consider $r>1$.

Let $S$ be the set of all vertices in source components of $G$.  Since
$[S,\Sb]$ is a dicut, there is a pair $(y,x)$ with $y\in S$ and $x\in \Sb$
such that $yx\notin E(G)$.  Add edge $xy$ to $G$, forming a new strict digraph
$G'$.  If $C(x)$ is a successor to $C(y)$ in $G$, then $G'$ has a strong
component containing $C(x)$ and $C(y)$, so $G'$ has at most $r-1$ strong
components.  Otherwise, $x\notin S$ implies that $C(x)$ has some source
component as a predecessor; let $z$ be a vertex in that component, and let
$G''=G'+yz$.  Since no edge connects two source components, $G''$ is a strict
digraph.  It has at most $r-2$ strong components, since $C(x)$, $C(y)$, and
$C(z)$ all lie in a single strong component of $G''$.  It now suffices by the
induction hypothesis to show that $G'$ and in the second case also $G''$ has no
complete dicut.

Let $[X,Y]$ be a complete dicut in $G'$.  As $G$ has no complete dicut, the
added edge $xy$ must satisfy $x\in X$ and $y\in Y$.  Thus $C(x)\esub X$ and
$C(y)\esub Y$.  In $G$ no edges enter the source component $C(y)$, so in $G'$
only one edge enters $C(y)$.  Since $C(y)\esub Y$, this implies
$|V(C(y))|=1$.  That makes $y$ a source vertex in $G$, so only the edge $xy$
enters it in $G'$.  Thus $|X|=1$.  This implies that $x$ is a source vertex in
$G'$ and therefore in $G$, which contradicts $x\notin S$.  We conclude that
$G'$ has no complete dicut.  

In the case where $C(x)$ is not a successor to $C(y)$, the source component
$C(z)$ in $G$ remains a source component in $G'$.  Also, the strong component
of $G'$ that contains both $C(x)$ and $C(y)$ from $G$ is a successor of $C(z)$
in $G'$.  Thus $G''$ is formed from $G'$ by adding $yz$ in the way that $G'$
was formed from $G$ by adding $xy$.  Since $G'$ satisfies the same hypotheses
required of $G$, the same argument now implies that $G''$ also has no complete
dicut.

{\bf Case 2:} \emph{$G$ is not weakly connected.}
Let $G_1,\ldots,G_k$ with $k>1$ be the weak components of $G$.  In each $G_i$
choose a source component $S_i$ and a sink component $T_i$ such that $T_i=S_i$
if $G_i$ is strong and otherwise $T_i$ is a successor of $S_i$.  Treating
subscripts modulo $k$, for $1\le i\le k$ add an edge $t_is_{i+1}$ such that
$t_i\in V(T_i)$ and $s_{i+1}\in V(S_{i+1})$.

The resulting digraph $G'$ is weakly connected.  It is obviously strict when
$k>2$, and when $k=2$ the edges $t_2s_1$ and $t_1s_2$ do not have the same
pair of endpoints because at least one weak component is not strong.  For the
same reason, the $k$ added edges are too few to complete a complete dicut unless
$k=2$ and $G$ has exactly three vertices, but then $G'$ is a (directed) cycle.  
Thus, the connected case applies to $G'$.

Let $r'$ be the number of strong components of $G'$.  Since all $S_i$ and all
$T_i$ lie in one strong component in $G'$, and there are at least $k+1$ such
sets since $S_i=T_i$ only when $G_i$ is strong, we have $r' \le r-k$.  By
Case 1, $G'$ can be made strongly connected by adding at most $r'-1$ edges, so
the number of edges needed to make $G$ strongly connected is at most $r-1$.
\end{proof}

This proof appears to require that, in the situation of Boesch--Tindell and Farzad et al., the graph underlying the partially oriented graph $M$ is the complete graph $K_n$.  Therefore, we do not expect a similar bound in the generality of those papers.

\begin{remark}\label{lbound}
Since every source component needs an entering edge and every sink component
needs an exiting edge, at least $\max(s,t)$ added edges are needed for a
strongly connected extension, where $s$ and $t$ are the numbers of source and
sink components.
\end{remark}

\begin{example}\label{usharp-conn}
The upper bound $r-1$ is sharp for weakly connected strict digraphs.  
Consider the digraph obtained from a transitive tournament with $r$ vertices by
deleting the unique spanning path.  There are $r$ strong components and no
complete dicut, and extending to a strong strict digraph requires adding the
$r-1$ missing edges.  When $r\ge4$, this example has two source components and
two sink components, so the lower bound in Remark~\ref{lbound} can be
arbitrarily bad when $G$ is weakly connected.
\end{example} 

\begin{example}\label{usharp-disc}
The upper bound $r-1$ is also sharp for digraphs with more than one weak
component when the components are not all strong.  Let $G$ be the digraph
formed from the complete bipartite graph $K_{p,q}$ with bipartition
$(X,Y)$, where $|X|=p$ and $|Y|=q$, by directing each edge from $X$ to $Y$
and adding an isolated vertex.  A strong extension must add
edges entering each vertex in $X$ and edges leaving each vertex in $Y$, but no
edge can do both.  Thus, the needed number of added edges is at least $p+q$,
which equals $r-1$.  Here again the bound of Remark~\ref{lbound} is weak.
\end{example}

In spite of Example~\ref{usharp-disc}, the upper bound can be reduced in the
disconnected case by using additional information, such as the number of
weak components that are not strong, the number of strong components that are
neither sources nor sinks, or the number of source and sink components in each
weak component.  We omit the details of the first two; the next result concerns
the last.

\begin{lemma}[Disconnected upper bound]\label{dubound-sum}
Let $G$ be a strict digraph that is not weakly connected.  Let $G_1,\ldots,G_k$
be its weak components, and let $s_i$ and $t_i$ be the number of source and
sink components, respectively, in $G_i$.  The minimum number of edges that must
be added to make $G$ strongly connected is at most
$
\max(t_1,s_2) + \cdots + \max(t_{k-1},s_k) + \max(t_k,s_1).
$
\end{lemma}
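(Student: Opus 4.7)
The plan is induction on $k$, reducing to the base case $k = 2$ by combining two adjacent weak components.

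\textbf{Inductive step $(k \ge 3)$.} Combine $G_{k-1}$ and $G_k$ by adding $\max(t_{k-1}, s_k)$ arcs from sink components of $G_{k-1}$ to source components of $G_k$, arranged so that every sink component of $G_{k-1}$ has an outgoing added arc and every source component of $G_k$ has an incoming added arc. All new arcs point from $G_{k-1}$ to $G_k$, so no antiparallel pairs are created and no strong components of $G_{k-1}$ or $G_k$ merge. The combined weak component has $s_{k-1}$ source components (those of $G_k$ now have incoming arcs) and $t_k$ sink components (those of $G_{k-1}$ now have outgoing arcs). Applying the induction hypothesis to the resulting $(k-1)$-weak-component digraph yields a further $\max(t_1, s_2) + \cdots + \max(t_{k-2}, s_{k-1}) + \max(t_k, s_1)$ arcs, and adding the combining cost $\max(t_{k-1}, s_k)$ matches the claimed bound.

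\textbf{Base case $(k = 2)$.} Add $\max(t_1, s_2)$ arcs from sinks of $G_1$ to sources of $G_2$ and $\max(t_2, s_1)$ arcs from sinks of $G_2$ to sources of $G_1$, each batch covering all relevant sinks and sources. An antiparallel pair across the two batches would require a vertex in $T_1 \cap S_1$ and another in $T_2 \cap S_2$, hence both $G_1$ and $G_2$ strong; the hypothesis $|V(G)| \ge 3$ then leaves enough freedom in the vertex choices to avoid the offending pair. To see that the arcs can be chosen so that $G'$ is strongly connected, argue contrapositively: any proper non-empty predecessor-closed set $X \subsetneq V(G')$ would restrict in each $V(G_i)$ to a down-set in the DAG of strong components of $G_i$, and every added arc entering a source component in $X \cap V(G_{i+1})$ would have to originate in a sink component in $X \cap V(G_i)$. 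Using the fact that, in a weakly connected digraph, the bipartite graph recording ``source component reaches sink component'' is connected (an easy induction along a spanning walk of the underlying graph), one selects the two batches so that for every candidate pair of down-sets some added arc crosses from the complement, ruling out all proper predecessor-closed subsets.

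\textbf{Main obstacle.} The principal difficulty is the base case: even with the right edge count, the selections must be coordinated across the two batches. When $\min(t_i, s_{i+1}) \ge 2$, a parallel matching of sinks to sources can split $G'$ into multiple orbits, each a predecessor-closed set; the connectedness of each weak component's bipartite reachability graph is the combinatorial tool that guarantees a good coordinated choice exists.
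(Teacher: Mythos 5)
Your reduction to the case $k=2$ is sound: merging $G_{k-1}$ and $G_k$ by a covering batch of $\max(t_{k-1},s_k)$ arcs creates no new strong components, leaves exactly $s_{k-1}$ source and $t_k$ sink components in the merged weak component, and the bookkeeping with the induction hypothesis gives the stated bound; the strictness remarks are also fine (the paper, by contrast, does not induct: it adds one covering batch between each consecutive pair of weak components in a single pass around the cyclic order). The problem is the base case, which you yourself flag as the principal difficulty and which is where all the content of the lemma lives: it is not proved. The sentence ``one selects the two batches so that for every candidate pair of down-sets some added arc crosses from the complement'' restates the goal rather than achieving it --- you never exhibit the coordinated choice of batches, and you never show how connectivity of the source--sink reachability bipartite graph of each $G_i$ forces such a choice to exist. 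Your instinct that coordination is genuinely needed is correct: take $G_1$ with vertices $a_1,a_2,b_1,b_2$ and arcs $a_1b_1,\,a_1b_2,\,a_2b_2$, let $G_2$ be a disjoint copy on $a_1',a_2',b_1',b_2'$, and add the batches $\{b_1a_1',\,b_2a_2'\}$ and $\{b_1'a_1,\,b_2'a_2\}$; each batch covers all the relevant sink and source components, yet no arc leaves the set $\{a_2,b_2,a_2',b_2'\}$, so the result is not strong, while the ``shifted'' second batch $\{b_1'a_2,\,b_2'a_1\}$ does work. So what the lemma really requires is an existence statement: given the connected reachability bipartite graphs $B_1,B_2$ (source components versus sink components of each weak component), one can choose covering batches of sizes $\max(t_1,s_2)$ and $\max(t_2,s_1)$ whose union with the internal reachability arcs admits no proper nonempty predecessor-closed set. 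That statement is plausible, and the fact you cite about $B_i$ being connected is true and surely relevant, but your proposal leaves the deduction entirely to the reader.

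To close the gap you would need an actual construction or induction --- for instance, working with spanning trees of $B_1$ and $B_2$ and removing a leaf source or sink component at a time while tracking how the maxima change, or giving an explicit ``staggered'' assignment of sinks to sources along traversals of those trees --- together with a verification that no closed set survives; the delicate subcases are exactly those where $\min(t_i,s_{i+1})\ge 2$, so that neither batch can use a single hub on both sides. (It is worth noting that the paper's own proof simply asserts that an arbitrary covering choice is ``obviously strongly connected,'' so your identification of the subtlety is a genuine contribution; but as written your argument does not supply the missing step either, and so the proof is incomplete.)
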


\begin{proof}
We add edges from sink components of $G_{i-1}$ to source components of $G_i$,
viewing subscripts modulo $k$.  We add at least one edge leaving each sink
component and one edge entering each source component.  This is easy to do
using $\max(t_{i-1},s_i)$ edges.  The resulting digraph is obviously strongly
connected and, when $k\ge3$, strict.  When $k=2$, the same observation as in
Case 2 of Theorem~\ref{ubound} allows it to be strict.
\end{proof}

The exact value of this upper bound depends on the cyclic order chosen for the
components of $G$.  We do not know a procedure to minimize the sum.
The lower bound from Remark~\ref{lbound} is $\max(\sum t_i,\sum s_i)$; it
equals the upper bound from Lemma~\ref{dubound-sum} when the comparison of
$t_{i-1}$ and $s_i$ goes the same way for each $i$.  Hence both bounds are
sharp.

Recall that the weak components of $G$ correspond to the components of $\hat G$.

\begin{prop}[Disconnected upper bound]\label{dubound}
A strict digraph $G$ that is not weakly connected can be strongly connected by
adding at most $s+t-c$ edges, where $G$ has $s$ source components, $t$ sink
components, and $c$ weak components.  The bound equals $u-c'$, where $u$ strong
components are source or sink components and $c'$ weak components are not
strong components.
\end{prop}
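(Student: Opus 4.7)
The plan is to derive the bound $s+t-c$ directly from Lemma~\ref{dubound-sum} by an algebraic manipulation, and then to establish the identity $s+t-c = u-c'$ by identifying which strong components are simultaneously source and sink components.

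For the bound, I would apply Lemma~\ref{dubound-sum} and rewrite each $\max(t_{i-1},s_i)$ as $t_{i-1}+s_i-\min(t_{i-1},s_i)$. Summing cyclically (indices mod $c$) gives
\[
\sum_{i=1}^{c}\max(t_{i-1},s_i) \;=\; t+s-\sum_{i=1}^{c}\min(t_{i-1},s_i),
\]
since $\sum t_i = t$ and $\sum s_i = s$. Each weak component, viewed through $G^*$, is a nonempty finite acyclic digraph and therefore has at least one source and at least one sink; thus $s_i,t_i\ge 1$ for every $i$, so $\min(t_{i-1},s_i)\ge 1$ for each $i$ and hence $\sum\min(t_{i-1},s_i)\ge c$. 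Substituting gives the claimed upper bound $s+t-c$.

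For the identity, let $b$ denote the number of strong components that are both source and sink components. Inclusion--exclusion on the families of source components and of sink components gives $u=s+t-b$, so the identity reduces to proving $b=c-c'$, i.e., $b$ equals the number of strong weak components. I would argue this structurally: a strong component $C$ that is both a source and a sink in $G^*$ admits no directed edge between $C$ and any other strong component; since $\hat G$ restricted to the weak component $W$ containing $C$ is connected and every edge of $G$ within $W$ is directed, this forces $W=C$, so $W$ is strong. Conversely, each strong weak component consists of a single strong component with no external edges, so that strong component is both a source and a sink. Thus $b$ equals the number of strong weak components, namely $c-c'$, and so $u-c' = s+t-b-c' = s+t-c$.

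The main obstacle is the identification $b=c-c'$; everything else is arithmetic bookkeeping on top of Lemma~\ref{dubound-sum}. That obstacle is modest---it amounts to the observation that ``both source and sink'' in $G^*$ forces the weak component to collapse to a single strong component---but it is the only place where one must reason about the structure of $G$ rather than manipulate inequalities.
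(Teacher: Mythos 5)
Your proposal is correct and follows essentially the same route as the paper: deduce the bound from Lemma~\ref{dubound-sum} via $\max(t_{i-1},s_i)\le t_{i-1}+s_i-1$ (using $s_i,t_i\ge 1$), and obtain $s+t-c=u-c'$ by noting that the strong components counted twice in $s+t$ are exactly those forming strong weak components. You merely make explicit the inclusion--exclusion and the ``both source and sink forces the weak component to be strong'' step that the paper leaves implicit.
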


\begin{proof}
This follows from Lemma~\ref{dubound-sum}, since
$\max(t_{i-1},s_i)\le t_{i-1}+s_i-1$ (again taking subscripts modulo $k$),
giving an upper bound of $\sum t_{i-1}+\sum s_i-c$.  The sum is $s+t-c$.  The
sum $s+t$ exceeds $u$ by the number $c-c'$ of weak components that are strongly
connected; hence $s+t-c=u-c'$.
\end{proof}

Proposition~\ref{dubound} strengthens the upper bound in Theorem~\ref{ubound}
when $G$ is not weakly connected.  By definition, always $u\le r$, so the upper
bound of $r$ is reduced by at least $1$ for each weak component that is not
strong.

\begin{example}
Toward understanding the problem of obtaining strong extensions by adding the
fewest edges, it is interesting to consider digraphs obtained from 
bipartite graphs.  Let $G$ be a strict digraph obtained from a bipartite graph
with bipartition $(X,Y)$ by orienting all edges from $X$ to $Y$.  We forbid the
underlying graph to be complete bipartite, because the orientation would yield
a complete dicut.  Let $s=|X|$ and $t=|Y|$.

As in Remark~\ref{lbound}, a strongly connected extension of $G$ must always
add at least $\max(s,t)$ edges; this is the trivial lower bound.  By symmetry,
suppose $s\ge t$.  We claim that achieving this lower bound requires adding
edges not in $\hat G$ that match $Y$ into $X$.  To
see this, note that $s$ added edges must enter $X$ and $t$ added edges must
exit $Y$.  To do this using only $s$ edges, each of the $t$ added edges leaving
$Y$ must be one of the $s$ edges entering $X$.  These $t$
edges form a matching of $Y$ into $X$.

If the digraph on $2t$ vertices induced by the vertices covered by this
matching is strongly connected, then adding an edge from the matched vertices
of $X$ to each of its $s-t$ remaining vertices completes the desired strongly
connected extension.

At the other extreme, when $G$ has $st-1$ edges, only the one edge missing
from $K_{s,t}$ can connect a sink to a source, and the upper bound $s+t-1$
cannot be improved.  

The common generalization of the two extremes improves
the lower bound to $s+t-m$, where $m$ is the maximum size of a matching from
$Y$ into $X$ using edges not in the original bipartite graph.
\end{example}

A digraph $G$ is \emph{$k$-connected} if it has more than $k$ vertices and any
deletion of fewer than $k$ vertices from $G$ leaves a strong digraph.
For a generalization analogous to those studied for non-strict digraphs,
we say that a strict digraph $G$ is \emph{$k$-connectable} if it extends
to a $k$-connected strict digraph.  An obvious necessary condition for
$k$-connectability is that every dicut $[X,\Xb]$ lacks at least $k$ edges.
It is not clear whether this condition is sufficient; our proofs for $k=1$
do not extend, because when $k\ge2$ the maximal $k$-connected subgraphs 
of a graph need not be pairwise disjoint.

\section{Non-transitive dice}\label{secdice}

In a set of ordinary dice, all dice are the same and the probability that one
die rolls a higher number than another is, if we exclude ties (e.g., by ignoring them and rolling again), exactly $1/2$.  Martin Gardner
publicized the idea, due to Bradley Efron, of dice where not only is the
probability other than $1/2$, but there can be three dice such that
each beats one of the others with probability greater than $1/2$
(see Gardner~\cite{G, G2, G3}).  Such dice are \emph{non-transitive}: generalizing to
$n$ dice, there exist $l$ that can be arranged cyclically so that each has
probability greater than $1/2$ of rolling higher than its successor.

Quimby found a set of four non-transitive 6-sided dice whose 24 sides are the
distinct numbers from 1 to 24~\cite{Q} (see Savage~\cite{Sa} for other sets of
non-transitive dice).  Schaefer and Schweig~\cite{2S} carried the idea further;
they studied $k$-sided generalized dice in which each die has $k$ different
numbers on it and the numbers on all dice are distinct.  We may regard each die
as a set of $k$ distinct integers and define a \emph{set of dice} $D$ as a set
of pairwise disjoint such sets.  As they observed, one can always choose the
dice to partition the set $\{1,\ldots,kn\}$.)

We say that die $D_1$ \emph{beats} $D_2$ and write $D_1\succ D_2$ if, among all
pairs of numbers in $D_1 \times D_2$, the first number is larger than the
second more than half the time.  A set of dice is \emph{transitive} if the
relation $\succ$ is transitive.  Although it may be contrary to intuition, a
randomly chosen set of (more than two) dice need not be transitive.  Schaefer
and Schweig~\cite{2S} showed that it is easy to make an intransitive set of
three or four $k$-sided dice when $k\ge3$.

The relation $\succ$ can be represented by a digraph $G(D)$ with one vertex for
each die and an edge from $i$ to $j$ if $D_j$ beats $D_i$.  The relation
$\succ$ is antisymmetric, meaning that $D\succ D'$ and $D'\succ D$ imply
$D=D'$, so the digraph $G(D)$ is strict.  If $H$ is any subgraph of $G(D)$, we
say that $D$ \emph{realizes} $H$; that means every edge of $H$ corresponds to a
pair of dice in $D$ in which one beats the other as indicated by the direction
of the edge.  ($H$ need not be an induced subgraph.)

Let $p_{i,j}$ be the probability that $D_i$ beats $D_j$; that is, $p_{i,j}$ is
the proportion of pairs in $D_i \times D_j$ in which the first number is the
larger.  (Because no two numbers on dice are equal, $p_{i,j}+p_{j,i}=1$.)  
Schaefer and Schweig~\cite{2S} call a set of dice \emph{balanced} if all
unordered pairs $\{p_{i,j},p_{j,i}\}$ are the same.  They found that for
$n=3\le k$ it is possible to form non-transitive dice that are balanced.
Schaefer~\cite{S} then showed that for a tournament $T$ of order $n \ge3$,
there is a balanced set of $n$ non-transitive dice that realizes $T$ if and
only if $T$ is strongly connected.  He observed the corollary that
non-transitive dice realizing a strict digraph $G$ can be chosen balanced if
and only if $G$ is strongly connectable.  Thus Theorem \ref{main} gives a
criterion for the existence of balanced dice realizing a given relation
(transitive or not).

\begin{corollary}\label{dice}
An antisymmetric relation is realizable by a set of balanced dice if and only
if its digraph has no complete dicut.
\end{corollary}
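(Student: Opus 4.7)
The plan is to derive this statement as a short chain of equivalences by combining Theorem~\ref{main} with Schaefer's observation quoted in the paragraph immediately preceding the corollary. First I would note that an antisymmetric relation $\succ$ on a finite set $V$ corresponds in a canonical way to a strict digraph $G$ on vertex set $V$, using the convention already in force in the text (an edge from $u$ to $v$ exactly when $D_v$ beats $D_u$). Antisymmetry of $\succ$ is precisely the condition forbidding antiparallel edges, so it is equivalent to $G$ being strict, placing antisymmetric relations and strict digraphs on $V$ in bijective correspondence.

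Next I would invoke Schaefer's observation from~\cite{S}, recalled just before the statement: a strict digraph $G$ is realizable by a balanced set of non-transitive dice if and only if $G$ is strongly connectable, that is, extends to a strongly connected strict digraph. Chaining this with Theorem~\ref{main}, which says that a strict digraph of order at least three is strongly connectable if and only if it contains no complete dicut, yields the desired biconditional: $\succ$ admits a balanced dice realization if and only if $G$ has no complete dicut.

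There is no real obstacle: the substance of the result is already packaged in Theorem~\ref{main} and in Schaefer's bridge between balanced dice and strongly connected tournaments, and the corollary is simply the composition of these two equivalences under the identification of an antisymmetric relation with its associated strict digraph. The one bookkeeping point worth flagging is the implicit order hypothesis $|V|\ge 3$ inherited from both cited results; for $|V|\le 2$ the statement is degenerate (for example, the single-edge relation on two elements is realizable by any two disjoint dice yet its digraph is itself a complete dicut), so the equivalence should be read with $|V|\ge 3$, exactly as in Schaefer's formulation.
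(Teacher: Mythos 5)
Your proposal is correct and matches the paper's (implicit) argument exactly: the corollary is stated as an immediate consequence of Schaefer's equivalence between balanced realizability and strong connectability, chained with Theorem~\ref{main}, under the identification of an antisymmetric relation with its strict digraph. Your remark about the order hypothesis $|V|\ge 3$ is a reasonable additional care, consistent with the hypotheses of the cited results.
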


\end{document}